\documentclass[a4paper]{amsart}
%% packages
\usepackage{amssymb,amsmath}
%\usepackage{amsrefs}
%% topmatter
\title{Twistors, $4$-symmetric spaces and integrable systems}
\author{Francis E. Burstall}
\address{Department of Mathematical Sciences\\ University of Bath\\
Bath BA2 7AY\\UK}
\email{feb@maths.bath.ac.uk}
\author{Idrisse Khemar}
\address{TU Munich, Zentrum Mathematik (M8)\\Boltzmannstr. 3\\D-85747 Garching\\
GERMANY}
\email{khemar@ma.tum.de}
\thanks{Part of this work was carried out during the workshop
" Surface Theory: Research in Pairs" at Kloster Schoental, March 2008.}
%% thms and friends
\theoremstyle{plain}

\newtheorem{thm}{Theorem}[section]

\newtheorem{lemma}[thm]{Lemma}

\theoremstyle{definition}

\theoremstyle{remark}
\newtheorem*{rem}{Remark}

%% macros
% Lie algebras
\newcommand{\g}{\mathfrak{g}}
\renewcommand{\k}{\mathfrak{k}}
\newcommand{\p}{\mathfrak{p}}
\newcommand{\h}{\mathfrak{h}}
\newcommand{\so}{\mathfrak{so}}
% fields
\newcommand{\C}{\mathbb{C}}
\newcommand{\R}{\mathbb{R}}
\newcommand{\Oct}{\mathbb{O}}
% operators
\DeclareMathOperator{\Hom}{Hom}
\DeclareMathOperator{\End}{End}
\DeclareMathOperator{\Aut}{Aut}
\DeclareMathOperator{\Fix}{Fix}
\DeclareMathOperator{\ad}{ad}
\DeclareMathOperator{\Ad}{Ad}
\DeclareMathOperator{\trace}{trace}
% miscellany
\newcommand{\submfd}{\Sigma}
\renewcommand{\d}{\mathrm{d}}
\newcommand{\II}{\mathrm{I\kern-1ptI}}
\newcommand{\nb}{\overline{\nabla}}
\newcommand{\ns}{\nabla^\submfd}
\newcommand{\np}{\nabla^\perp}
\newcommand{\half}{\tfrac{1}{2}}
\newcommand{\set}[1]{\{#1\}}
\newcommand{\mcv}{\mathbf{H}}
%% at end so can subvert things
%\usepackage{hyperref}

\begin{document}
\maketitle

\section{Introduction}
\label{sec:introduction}

The theory of harmonic maps of surfaces has been greatly enriched by
ideas and methods from integrable systems
\cite{BurFerPed93,Hit90,Poh76,Uhl89,Zakilo78,ZakSha78}.  More
recently, H\'elein--Romon \cite{HelRom02,HelRom02A,HelRom05} have
applied similar ideas in their study of Hamiltonian stationary
Lagrangian surfaces in $4$-dimensional Hermitian symmetric spaces.
The integrable system arising in this latter theory is a special case
of one discussed by Terng \cite{Ter02} for which a geometric
interpretation seemed lacking.  It is the purpose of this paper to
remedy that lack.

The underlying algebraic structure of the situation is a Lie algebra
equipped with an automorphism of order $4$.  Geometrically, this
means we have to do with a (locally) $4$-symmetric space and the
integrable system we study amounts to an equation on maps from a
Riemann surface into this space.  We begin by observing the
$4$-symmetric spaces may be viewed as submanifolds of the twistor
space of an associated Riemannian symmetric space (this is further
elaborated in \cite{Khe08}) and then our equation becomes the demand
that the map into twistor space be a vertically harmonic twistor
lift.  When the Riemannian symmetric space is $4$-dimensional,
twistor lifts of conformal immersions are canonically defined and we
show that vertical harmonicity amounts to holomorphicity of the mean
curvature vector.  In particular, we find that conformal immersions
of Riemann surfaces in $4$-dimensional space-forms with holomorphic
mean curvature vector constitute an integrable system.

\section{An integrable system}
\label{sec:an-integrable-system}
Let us begin by describing an integrable system that has arisen in
the general theory of Terng \cite{Ter02} and work of H\'elein--Romon
\cite{HelRom02,HelRom02A,HelRom05} and Khemar \cite{Khe05}.

Our first ingredient is a Lie algebra $\g$ together with an
automorphism $\tau\in\Aut(\g)$ of order $4$.  We have an
eigenspace decomposition
\begin{equation}
\label{eq:1}
\g^\C=\g_0\oplus\g_2\oplus\g_1\oplus\g_{-1}
\end{equation}
so that $\tau$ has eigenvalue $e^{2\pi ik/4}$ on $\g_k$.

Now let $\submfd$ be a Riemann surface and
$\alpha\in\Omega^1_\submfd\otimes\g$ be a $\g$-valued $1$-form on
$\submfd$.  The complex structure $J^\submfd$ of $\submfd$ gives a
type decomposition $\alpha=\alpha^{1,0}+\alpha^{0,1}$ while
\eqref{eq:1} gives a second decomposition:
\begin{equation*}
\alpha=\alpha_0+\alpha_2+\alpha_1+\alpha_{-1}.
\end{equation*}

With this in hand, our integrable system, the \emph{second elliptic
$(\g,\tau)$-system} of \cite{Ter02}, comprises the following
equations on $\alpha$:
\begin{subequations}
\label{eq:2}
\begin{gather}
\alpha^{0,1}_{1}=0 \label{eq:2a}\\
\d\alpha^{1,0}_2+[\alpha_0\wedge\alpha^{1,0}_2]=0
\label{eq:2b}\\
\d\alpha+\tfrac{1}{2}[\alpha\wedge\alpha]=0.\label{eq:2c}
\end{gather}
\end{subequations}
The equations \eqref{eq:2} have a \emph{zero-curvature}
formulation: for $\lambda\in\C^\times$, define
$\alpha_\lambda\in\Omega^1_\submfd\otimes\g^\C$ by
\begin{equation*}
\alpha_\lambda=\lambda^{2}\alpha^{1,0}_2+\lambda\alpha^{1,0}_{1}+
\alpha_0+\lambda^{-1}\alpha^{0,1}_{-1}+\lambda^{-2}\alpha^{0,1}_2.
\end{equation*}
Then, in the presence of \eqref{eq:2a}, equations
\eqref{eq:2b} and \eqref{eq:2c} are equivalent to the demand
that
$\d\alpha_\lambda+\tfrac{1}{2}[\alpha_\lambda\wedge\alpha_\lambda]=0$
(thus $\d+\alpha_\lambda$ is a flat connection), for all
$\lambda\in\C^\times$.

Thus the methods of integrable system theory (see, for example,
\cite{BraDor06}) apply to give generalised Weierstrass formulae,
algebro-geometric solutions, spectral deformations and so on.

Our purpose in this note is to describe the geometry behind this
integrable system.
  
\section{$4$-symmetric spaces and twistor spaces}
\label{sec:4-symmetric-spaces}
Let $\sigma=\tau^2$ so that $\sigma^2=1$ and we have a symmetric
decomposition
\begin{equation*}
\g=\k\oplus\p
\end{equation*}
with $\k^\C=\g_0\oplus\g_2$ and $\p^\C=\g_{-1}\oplus\g_1$.  Without
loss of generality, we assume that $\ad_{|\p}:\k\to\End(\p)$ injects
(any kernel is a $\tau$-invariant ideal of $\g$ that we factor out).
We then have:
\begin{subequations}
\label{eq:3}
\begin{align}
\label{eq:4}
\g_0&=\set{\xi\in\g^\C:[\ad\xi_{|\p},\tau_{|\p}]=0}\\
\label{eq:5}
\g_2&=\set{\xi\in\g^\C:\lbrace\ad\xi_{|\p},\tau_{|\p}\rbrace=0}
\end{align}
\end{subequations}
where here, and below, $\lbrace\,,\,\rbrace$ is anti-commutator.

Now let us integrate our setup: let $(G,K)$ be a symmetric pair
corresponding to $(\g,\sigma)$, that is, $G$ is a Lie group with Lie
algebra $\g$ with an involution $\sigma\in\Aut(G)$ integrating our
$\sigma\in\Aut(\g)$ and $(G^\sigma)_0\leq K\leq G^\sigma$.  Thus
$G/K$ is a symmetric space and we assume henceforth that $K$ is
compact so that $G/K$ is a Riemannian symmetric space.

Define $H\leq G$ by
\begin{equation*}
H=\set{h\in K\colon \Ad(h)\circ\tau_{|\p}=\tau_{|\p}\circ\Ad(h)}
\end{equation*}
From \eqref{eq:3}, we deduce that $H$ is the stabiliser of $\tau$ in
$K$ and so has Lie algebra $\h=\g_0\cap\g$.  Thus $G/H$ is a locally
$4$-symmetric space\footnote{In fact, in our examples below, $G$ will
admit $\tau\in\Aut(G)$ with $\tau^4=1$ integrating $\tau$ and $H$
will be open in $\Fix(\tau)$ so that $G/H$ is globally
$4$-symmetric.}.  By construction, $G/H$ fibres over the Riemannian
symmetric space $G/K$ and we claim that this fibration factors
through the twistor fibration of $G/K$.

For this, recall that the twistor space $J(N)$ of an even-dimensional
Riemannian manifold is the bundle of orthogonal almost complex
structures on $TN$:
\begin{equation*}
J(N)=\{j\in\so(TN): j^2=-1\}. 
\end{equation*}
In the case at hand, we have the standard identification $T_{eK}G/K=\p$ so
that $\tau_{|\p}$ is such an orthogonal\footnote{That $\tau_{|\p}$ is
orthogonal for some $K$-invariant inner product on $\p$ is the
content of \cite[Theorem~21]{Khe08}.} almost complex structure. 
Extending by equivariance, we see that the fibration $\pi: G/H\to
G/K$ factors $G/H\hookrightarrow J(G/K)\to G/K$. 

\begin{rem}
In \cite{Khe08} it is shown that, up to covers, essentially all
locally $4$-symmetric spaces arise as submanifolds of the twistor
space of a Riemannian symmetric space in this way.
\end{rem}

Given a Riemannian symmetric space $G/K$, it is natural to ask which
$G$-orbits in $J(G/K)$ arise by the above procedure.  There is a
necessary condition: under the identification $T_{eK}G/K=\p$, the
curvature operator at $eK$ is given by $R_{eK}(X,Y)=-\ad[X,Y]_{|\p}$
so that, since $\tau\in\Aut(\g)$, we have, for $j=\tau_{|\p}$,
\begin{equation}
\label{eq:7}
% equivariance of R
R(jX,jY)=j\circ R(X,Y)\circ j^{-1},
\end{equation}
for $X,Y\in T_{\pi(j)}G/K$.  In fact, in most cases, this condition is also
sufficient:
\begin{thm}[\cite{Khe08}]\label{th:1}% G-orbit
Let $G/K$ be a simply-connected Riemannian symmetric space on which
$G$ acts effectively and $j\in J(G/K)$.  Then the $G$-orbit of $j$ is
a locally $4$-symmetric space arising as above from some
$\tau\in\Aut(\g)$ of order $4$ if and only if \eqref{eq:7} holds.
\end{thm}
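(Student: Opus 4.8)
The plan is to work infinitesimally and then integrate. The "only if" direction is already established by the curvature computation $R_{eK}(X,Y) = -\ad[X,Y]_{|\p}$ together with $\tau\in\Aut(\g)$, which gives \eqref{eq:7}; so the content is the converse. Fix $j\in J(G/K)$ and, using $G$-homogeneity, place its basepoint at $eK$, so $j$ becomes an orthogonal complex structure on $\p$ satisfying $R(jX,jY) = j\,R(X,Y)\,j^{-1}$. The first step is to promote this $j$ to a candidate automorphism $\tau$ of $\g$ of order $4$. The natural guess is to set $\tau_{|\p} = j$ and to define $\tau_{|\k}$ via the (assumed injective) adjoint representation $\ad_{|\p}\colon\k\hookrightarrow\End(\p)$: declare $\tau$ on $\k$ to be conjugation $\xi\mapsto j\circ\ad\xi_{|\p}\circ j^{-1}$, read back through the injection. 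Equivalently, decompose $\k^\C$ into the $\pm1$-eigenspaces of this conjugation and call them $\g_0,\g_2$ as in \eqref{eq:3}. One must then check this $\tau$ is a Lie algebra automorphism: the bracket $[\k,\k]\subseteq\k$ and $[\k,\p]\subseteq\p$ relations are immediate from the definition, the $[\p,\p]\subseteq\k$ relation is exactly where the curvature condition \eqref{eq:7} is used (it forces $\ad[jX,jY]_{|\p} = j\,\ad[X,Y]_{|\p}\,j^{-1}$, i.e.\ $\tau[X,Y]=[\tau X,\tau Y]$ for $X,Y\in\p$), and $\tau^4=1$ is clear since $j^2=-1$ on $\p$ and conjugation by $j^2=-1$ is trivial on $\End(\p)$, hence on $\k$.

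The second step is to integrate $\tau\in\Aut(\g)$ to $\Aut(G)$: since $G/K$ is simply-connected and $G$ acts effectively, $G$ is (a suitable cover of) the automorphism-type group and $\tau$ exponentiates to an automorphism of $G$ of order $4$; then set $H = (\Fix\tau)_0$, or the stabiliser of $j$ in $K$, whichever the bookkeeping prefers, and form $G/H$. The third step is to identify the $G$-orbit of $j$ in $J(G/K)$ with $G/H$: the orbit map $gH\mapsto g\cdot j$ is well-defined and injective precisely because $H$ is the stabiliser of $j$, and one checks the stabiliser of $j$ in $K$ has Lie algebra $\h=\g_0\cap\g$ using the eigenspace characterisation \eqref{eq:4} — this is the same computation that appeared just before the definition of $H$ in the main text. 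Finally, $G/H$ is locally $4$-symmetric because $\tau$ has order $4$ and $\g_0 = \h^\C$, so the $\tau$-eigenspace decomposition \eqref{eq:1} furnishes the reductive/$4$-graded structure; and by construction it sits inside $J(G/K)$ via $gH\mapsto g\cdot j$, i.e.\ it arises "as above" from this $\tau$.

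I expect the main obstacle to be the second step, the global integration and the precise choice of $H$. Verifying that $\tau$ is a Lie algebra automorphism is essentially forced by \eqref{eq:7} and is routine; but lifting $\tau$ to the group, ensuring the resulting $4$-symmetric space is exactly the $G$-orbit (not a cover of it or a covered-by it), and matching up $H$ with the stabiliser of $j$ so that the Lie algebra of $H$ is $\h=\g_0\cap\g$ — all of this requires care with covers and with the hypotheses "simply-connected" and "$G$ acts effectively", which are presumably in the statement precisely to make this work. A secondary subtlety is checking that $j$, which is orthogonal for the symmetric-space metric on $\p$ by hypothesis, is also orthogonal for a $K$-invariant (hence $H$-invariant) inner product in the sense needed to land in $J(G/K)$ as defined; but since the metric on $\p$ is already $\Ad(K)$-invariant this should be automatic, and in any case is covered by \cite[Theorem~21]{Khe08}. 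Since the statement is attributed to \cite{Khe08}, the "proof" here would likely be a sketch indicating these steps and referring there for the details of the integration.
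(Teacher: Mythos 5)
The first thing to say is that the paper does not prove Theorem~\ref{th:1}: it is imported wholesale from \cite{Khe08}, and the only argument actually supplied in the text is the necessity computation $R_{eK}(X,Y)=-\ad[X,Y]_{|\p}$ in the paragraph preceding the statement, which you correctly identify as disposing of the ``only if'' direction. So your sketch is competing with the proof in \cite{Khe08}, not with anything on the page; as a strategy for the converse it is broadly the right one (set $\tau_{|\p}=j$ and transport conjugation by $j$ to $\k$ through the injection $\ad_{|\p}\colon\k\to\End(\p)$), and the verification that the resulting $\tau$ respects $[\k,\k]$, $[\k,\p]$ and --- via \eqref{eq:7} --- $[\p,\p]$, with $\tau^2=\sigma$, is exactly as you describe.

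The genuine gap is at the very first step: the \emph{well-definedness} of $\tau$ on $\k$. To ``read $j\circ\ad\xi_{|\p}\circ j^{-1}$ back through the injection'' you must know that conjugation by $j$ preserves the subalgebra $\ad(\k)_{|\p}\subseteq\End(\p)$, and \eqref{eq:7} only guarantees this on $\ad([\p,\p])_{|\p}$. When $\g$ is semisimple and the action effective one has $\k=[\p,\p]$ (the Killing-orthogonal complement of the ideal $[\p,\p]\oplus\p$ is an ideal contained in $\k$, hence zero), so the step closes; but on flat factors $[\p,\p]=0$, the curvature condition is vacuous, and both the $j$-invariance of $\ad(\k)_{|\p}$ and the local $4$-symmetry of the resulting orbit require a separate argument. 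This is precisely the degenerate regime flagged in the Remark following the theorem (the $\R^k\times T^{n-k}$ examples), and it is where simple connectedness earns its keep; your sketch does not engage with it. Conversely, the step you single out as the main obstacle --- lifting $\tau$ to $\Aut(G)$ --- is a non-issue: the construction of Section~\ref{sec:4-symmetric-spaces} defines $H$ directly as the stabiliser of $\tau_{|\p}=j$ in $K$, so $G/H$ is the $G$-orbit of $j$ by definition, and all that is needed is the Lie-algebra identity $\h=\g_0\cap\g$ coming from \eqref{eq:3}; the group-level integration of $\tau$ appears only in a footnote about the examples.
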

\begin{rem}
In fact, Theorem~\ref{th:1} holds in rather more generality.  A
problem arises only for flat spaces of the form $\R^k\times T^{n-k}$
where the curvature condition is vacuous but orbits by the isometry
group of those $j$ which do not respect the product structure fail to
be locally $4$-symmetric.  See \cite[Theorem~10]{Khe08} for more
details.
\end{rem}

\section{Vertically harmonic twistor lifts}
\label{sec:vert-harm-twist}

Let us now return to the integrable system \eqref{eq:2}: a solution
$\alpha$ of \eqref{eq:2} integrates, by virtue of \eqref{eq:2c}, at
least locally, to give a map $g:\submfd\to G$ with $g^{-1}\d
g=\alpha$ and thus a map $j=gH:\submfd\to G/H$.  Since the system is
gauge-invariant, (replace $g$ by $gh$ for any $h:\submfd\to H$),
it is the map $j$ that carries the geometry. 

With $N=G/K$, we view $j:\submfd\to G/H\subset J(N)$ as a map into
twistor space and set $\phi=\pi\circ j:\submfd\to N$.  We may
therefore view $j$ as an orthogonal almost complex structure on
$\phi^{-1}TN$.  Any local frame $g:\submfd\to G$ with $j=gH$ and
$\alpha=g^{-1}\d g$ gives an isomorphism
$\phi^{-1}TN^\C\cong\submfd\times\p$ under which $j$ corresponds to
$\tau_{|\p}$; $\d\phi$ with $\alpha_1+\alpha_{-1}$ and
$\d+\alpha_0+\alpha_2$ with $\nb$, the Levi-Civita connection of $N$,
pulled back by $\phi$.  Let $D$ be the connection on $\phi^{-1}TN$
corresponding in this way to $\d+\alpha_0$.  From \eqref{eq:3}, we
have that $(\d+\alpha_0)\tau_{|\p}=0$ giving $Dj=0$ and also that
$\ad\alpha_2$ anti-commutes with $\tau_{|\p}$ whence $\nb-D$
anti-commutes with $j$.  It follows at once that $\alpha_2$
corresponds to $\half j(\nb j)$ so that $D=\nb-\half j(\nb j)$.  We
can now read off the significance of the equations \eqref{eq:2}:

First \eqref{eq:2a} is exactly the assertion that $\phi$ is
holomorphic with respect to $j$: $\d\phi\circ
J^\submfd=j\circ\d\phi$.  This means that $j$ is a \emph{twistor
lift} of $\phi$.  As a consequence, $\phi$ is a (branched) conformal
immersion. 

Equation \eqref{eq:2b} amounts to the demand that $\d^D
j(\nb j)^{1,0}=0$.  We then have the complex conjugate equation $\d^D
j(\nb j)^{0,1}=0$ and, together, these are equivalent to
\begin{subequations}
\label{eq:8}
\begin{align}
\label{eq:9}
0&=d^Dj(\nb j)\\
\label{eq:10}
0&=d^D*j(\nb j). 
\end{align}
\end{subequations}
The first of these is a consequence of \eqref{eq:2a} and
\eqref{eq:7}.  For this, write 
\begin{equation*}
\so(\phi^{-1}TN)=\so_+(\phi^{-1}TN)\oplus\so_-(\phi^{-1}TN)
\end{equation*}
for the $D$-parallel, symmetric decomposition of $\so(\phi^{-1}TN)$
into $j$-commuting and $j$-anti-commuting parts and let
$\pi_-:\so(\phi^{-1}TN)\to\so_-(\phi^{-1}TN)$ be the (orthogonal)
projection along $\so_+(\phi^{-1}TN)$.  Then $\nb j$ takes values in
$\so_-(\phi^{-1}TN)$ and, since
$[\so_-(\phi^{-1}TN),\so_-(\phi^{-1}TN)]\subset\so_+ (\phi^{-1}TN)$,
$D=\pi_-\circ\nb$ on $\so_- (\phi^{-1}TN)$.  We now compute:
\begin{equation*}
0=d^D j(\nb j)=j d^D\nb j=j\pi_-d^{\nb} \nb
j=j\pi_-[R^{\nb},j]=j[R^{\nb},j]. 
\end{equation*}
Thus \eqref{eq:9} is equivalent to $R^{\nb}$ commuting with $j$ which
is an immediate consequence of Theorem~\ref{th:1} along with the
observation that $\d\phi(T\submfd)$ is $j$-stable in view of
\eqref{eq:2a}. 

On the other hand, equation \eqref{eq:10} is the vertical part of a harmonic map
equation for $j$:
\begin{equation*}
0=* d^D * (j\nb j)=j\pi_-(* d^{\nb} * \nb j)=-j\pi_- \nb^*\nb j. 
\end{equation*}
This is exactly the condition that $j$ be a harmonic section of
$J(\phi^{-1}TN)$ in the sense of C.M.~Wood \cite[see
Theorem~4.2(c)]{Woo03}.  We say that such a twistor lift is
\emph{vertically harmonic}. 

To summarise:
\begin{thm}
\label{th:3}%vertically harmonic
Let $j:\submfd\to G/H \subset J(G/K)$ be a map of a Riemann surface
into the twistor space of a Riemannian symmetric space $G/K$ which
factors through a locally $4$-symmetric space as in
section~\ref{sec:4-symmetric-spaces}.  Let $\phi=\pi\circ j$. 

Then $j$ admits local frames $g$ with $\alpha=g^{-1}\d g$ solving
\eqref{eq:2} if and only if 
\begin{enumerate}
\item $j$ is a twistor lift: $j\circ\d\phi=\d\phi\circ J^\submfd$;
\item $j$ is vertically harmonic: $[\nb^*\nb j,j]=0$. 
\end{enumerate}
Moreover, in this case, $[R^{\nb},j]=0$. 
\end{thm}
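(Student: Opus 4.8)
The plan is to unwind the frame-bundle description set up in section~\ref{sec:vert-harm-twist} and simply observe that the three conditions in system~\eqref{eq:2} have been reinterpreted one at a time, so the theorem is essentially a bookkeeping statement once the translation dictionary is in place. Concretely, I would first fix a local frame $g:\submfd\to G$ with $j=gH$ and $\alpha=g^{-1}\d g$; such frames exist locally because $H\hookrightarrow G\to G/H$ is a principal bundle, and any two differ by a map into $H$. Under the identification $\phi^{-1}TN^\C\cong\submfd\times\p$ afforded by $g$, one has the correspondences already recorded in the text: $\d\phi\leftrightarrow\alpha_1+\alpha_{-1}$, $\nb\leftrightarrow\d+\alpha_0+\alpha_2$, $D\leftrightarrow\d+\alpha_0$, $j\leftrightarrow\tau_{|\p}$, and $\alpha_2\leftrightarrow\half j(\nb j)$, with the complex conjugate assignment $\alpha_{-2}$—which here is $\alpha_2$ since $\g_2$ is $\sigma$-invariant and real—paired with the $(0,1)$ piece.

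Granting that dictionary, I would argue the equivalence term by term. The condition~\eqref{eq:2a}, $\alpha^{0,1}_1=0$, is by inspection the statement $\d\phi\circ J^\submfd=j\circ\d\phi$, i.e.\ that $j$ is a twistor lift of $\phi$; this is item~(1). For the remaining two equations, I would recall from the text that \eqref{eq:2c} is the Maurer--Cartan equation, automatic for $\alpha=g^{-1}\d g$, and hence carries no information about $j$ itself beyond the existence of the frame; and that, in the presence of \eqref{eq:2a}, equation~\eqref{eq:2b} translates into $\d^D j(\nb j)^{1,0}=0$. Taking the complex conjugate and combining, \eqref{eq:2b} is equivalent to the pair~\eqref{eq:9}--\eqref{eq:10}. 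The key structural point, already proved in the excerpt, is that \eqref{eq:9} holds automatically: it is equivalent to $[R^\nb,j]=0$, which follows from Theorem~\ref{th:1} together with the $j$-stability of $\d\phi(T\submfd)$ guaranteed by~\eqref{eq:2a}. Therefore, once~\eqref{eq:2a} is assumed, \eqref{eq:2b} reduces to~\eqref{eq:10} alone, and~\eqref{eq:10} was shown to be exactly $j\pi_-\nb^*\nb j=0$, i.e.\ $[\nb^*\nb j,j]=0$, which is item~(2). Running the implications in both directions then gives the stated equivalence, and the ``moreover'' clause is precisely the automatic vanishing~\eqref{eq:9} recorded along the way.

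The one point that genuinely needs care—and which I expect to be the main obstacle—is the passage between the local, frame-dependent equations~\eqref{eq:2} and the global, frame-independent conditions on $j$ and $\phi$. I would need to check that the quantities appearing in~\eqref{eq:2b}, \eqref{eq:9}, \eqref{eq:10} are gauge-invariant, i.e.\ unchanged when $g$ is replaced by $gh$ for $h:\submfd\to H$, so that the conditions on $j$ do not depend on the choice of frame; this is where the definition of $H$ as the $\Ad$-stabiliser of $\tau_{|\p}$ does the work, since it ensures that $\Ad(h)$ intertwines the $\g_k$-gradings and hence that $D$, $\nb$ and the decomposition $\alpha_2\leftrightarrow\half j(\nb j)$ are frame-independent. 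One also has to be slightly careful that the identification of $\alpha_2$ with $\half j(\nb j)$ used the splitting $\nb-D$ into its $j$-commuting and $j$-anti-commuting parts correctly, and that the reality of $\g_2$ makes $\alpha^{1,0}_{-2}$ literally the conjugate of $\alpha^{1,0}_2$ so that the complex-conjugate equation is legitimate; but these are routine once the gauge-invariance is pinned down. With that in hand, the proof is a direct assembly of the computations already displayed in section~\ref{sec:vert-harm-twist}.
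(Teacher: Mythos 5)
Your proposal is correct and follows the paper's own argument essentially verbatim: the proof of Theorem~\ref{th:3} in the paper is exactly the running discussion of Section~\ref{sec:vert-harm-twist} that you reassemble --- the frame dictionary, the identification of \eqref{eq:2a} with the twistor-lift condition, the splitting of \eqref{eq:2b} into \eqref{eq:9} and \eqref{eq:10} by complex conjugation, and the observation that \eqref{eq:9} holds automatically via the curvature identity \eqref{eq:7}, which also yields the ``moreover'' clause. Your added attention to gauge-invariance is a point the paper dispatches in a single sentence, not a different route.
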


It is interesting that our integrable system is solely concerned with
the geometry of $j$ \emph{qua} map into twistor space.  The only role
played by the locally $4$-symmetric space $G/H$ is to provide a (possibly
empty) algebraic constraint on $j$ and a curvature identity. 

\section{$4$ dimensions and holomorphic mean curvature vector}

We have seen that our theory is one about twistor lifts of conformal
immersions in a symmetric space.  In general, there are many such
lifts but, in favourable circumstances, there are distinguished lifts
and then our theory is one of the conformal immersions themselves. 

In particular, suppose that $\phi:\submfd\to N$ is a conformal
immersion into an oriented $4$-manifold.  The twistor space of $N$
has two components $J_{\pm}(N)$, each a $S^2$-bundle over $N$, and
there are unique twistor lifts $j_\pm:\submfd\to J_\pm(N)$ given by
choosing one of the two orthogonal almost complex structures on the
normal bundle of $\phi$.  The vertical harmonicity of these twistor
lifts has been studied by Hasegawa \cite{Has07}. 

In this situation, we have a splitting $\phi^{-1}TN=T\submfd\oplus
N\submfd$ and a corresponding decomposition
\begin{equation*}
\nb=
\begin{pmatrix}
\ns&-\II^t\\
\II&\np
\end{pmatrix}
\end{equation*}
where $\ns,\np$ are, respectively, the Levi-Civita and
normal connections of $\submfd$ and
$\II\in\Omega^1_\submfd\otimes\Hom(T\submfd,N\submfd)$
is the second fundamental form of the immersion.  

To explicate the vertical harmonicity of $j=j_\pm$, write
$\II=\II_++\II_-$ where $\II_+$ commutes with $j$ and $\II_-$
anti-commutes.  Both $j_{|T\submfd}$ and $j_{|N\submfd}$ are
rotations through $\pi/2$ and so $\ns j=\np j=0$.  We
deduce that
\begin{align*}
\half j\nb j&=\half j[\II-\II^t,j]=\II_--\II^t_-\\
D&=\ns+\np+\II_+-\II^t_+. 
\end{align*}
Moreover, $[(\II-\II^t)_+\wedge *(\II-\II^t)_-]$ must vanish as it takes values in
$\so(T\submfd)\oplus\so(N\submfd)$ and anti-commutes
with $j$ and we therefore conclude that $j$ is vertically harmonic if
and only if 
\begin{equation}
\label{eq:6}
\d^{\ns,\np}*\II_-=0. 
\end{equation}

In case $j$ factors through a locally $4$-symmetric space,
\eqref{eq:6} admits a simple interpretation thanks to \eqref{eq:7}
and the Codazzi equation.  For this, use $j_{|N\submfd}$ to view
$N\submfd$ as a complex line bundle and then equip that line bundle
with the Koszul--Malgrange holomorphic structure whose
$\bar\partial$-operator is $(\np)^{0,1}$.  We now have:
\begin{thm}\label{th:2}%holo mean curv
Let $\phi:\submfd\to G/K$ be a conformal immersion of a Riemann
surface into a $4$-dimensional Riemannian symmetric space $G/K$ and
let $j:\submfd\to G/H\subset J(G/K)$ be a twistor lift of $\phi$
factoring through a locally $4$-symmetric space $G/H$ as in
Section~\ref{sec:4-symmetric-spaces}.   

Then $j$ is vertically harmonic if and only if the mean curvature
vector $\mcv=\half\trace\II$ is a holomorphic section of
$(N\submfd,j_{|N\submfd})$.
\end{thm}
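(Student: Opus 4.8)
The plan is to work in a local conformal coordinate $z$ on $\submfd$ and to rewrite both conditions---vertical harmonicity, in the form \eqref{eq:6}, and holomorphicity of $\mcv$---as statements about $\np_{\bar\partial}\mcv^{1,0}$, where $\mcv^{1,0}$ is the $(1,0)$-part of $\mcv$ with respect to $j_{|N\submfd}$; the link between the two will be furnished by the Codazzi equation together with \eqref{eq:7}.

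First I would fix notation: write $\partial=\partial_z$ and set $\mu=\langle\partial,\bar\partial\rangle$, so that conformality gives $\II(\partial,\bar\partial)=\mu\mcv$, while the only non-vanishing Christoffel symbols of $\ns$ are $\ns_\partial\partial=(\partial_z\log\mu)\partial$ and its conjugate; in particular $\ns_{\bar\partial}\partial=\ns_\partial\bar\partial=0$. Since $\II_-$ is the $j$-anti-commuting part of $\II$, it sends $T^{1,0}\submfd$ to $N^{0,1}\submfd$ and $T^{0,1}\submfd$ to $N^{1,0}\submfd$; writing $Q=\II(\partial,\partial)$ for the Hopf differential and $\II_-=\II'_-\,\d z+\II''_-\,\d\bar z$ (so $\II''_-=\overline{\II'_-}$), one reads off that $\II'_-(\bar\partial)=\mu\mcv^{1,0}$ and $\II'_-(\partial)=Q^{0,1}$, the $N^{0,1}$-part of $Q$. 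Because $*\,\d z=-i\,\d z$, a direct computation of $\d^{\ns,\np}(*\II_-)$ in this frame, using $\ns_{\bar\partial}\partial=0$ to kill the connection terms, reduces \eqref{eq:6} to the single equation
\[
\mu\,\np_{\bar\partial}\mcv^{1,0}+\np_\partial P=0,\qquad P:=\overline{Q^{0,1}}\in N^{1,0}\submfd .
\]
On the other hand, since the Koszul--Malgrange $\bar\partial$-operator on $(N\submfd,j_{|N\submfd})$ is $(\np)^{0,1}$, the section $\mcv$ is holomorphic exactly when $\np_{\bar\partial}\mcv^{1,0}=0$. So the theorem amounts to showing that $\np_\partial P$ and $\mu\,\np_{\bar\partial}\mcv^{1,0}$ always coincide.

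This is where Codazzi enters: for the immersion $\phi$ one has $(\np_\partial\II)(\bar\partial,\bar\partial)-(\np_{\bar\partial}\II)(\partial,\bar\partial)=(R^{\nb}(\partial,\bar\partial)\bar\partial)^\perp$, and the left-hand side, again using the above Christoffel symbols, equals $\np_\partial\overline{Q}-\mu\,\np_{\bar\partial}\mcv$. Here I would use \eqref{eq:7}: as $\phi$ is holomorphic, $\partial$ and $\bar\partial$ are $\pm i$-eigenvectors of $j$ on $\phi^{-1}TN$, and \eqref{eq:7} then forces $R^{\nb}(\partial,\bar\partial)\bar\partial$ into the $(-i)$-eigenspace of $j$, so that (the normal bundle being $j$-invariant) its normal part lies in $N^{0,1}\submfd$. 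Projecting the Codazzi identity onto $N^{1,0}\submfd$ therefore annihilates the curvature term and yields $\np_\partial P=\mu\,\np_{\bar\partial}\mcv^{1,0}$ identically. Substituting this into the displayed equation turns \eqref{eq:6} into $2\mu\,\np_{\bar\partial}\mcv^{1,0}=0$, which, since $\mu\neq0$, is the desired equivalence.

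The coordinate computation of $\d^{\ns,\np}(*\II_-)$ is routine but must be done carefully: the main bookkeeping task is to express $\II_-$ correctly in terms of $\mcv$ and the Hopf differential and to check that the conformal-factor terms cancel. The one genuinely conceptual point---and the step I expect to be the crux---is the use of \eqref{eq:7} to see that the Codazzi curvature term has vanishing $N^{1,0}\submfd$-component; this ``anti-holomorphicity'' of the obstruction is precisely what lets the derivative of the Hopf differential be traded for that of $\mcv$, so that the two a priori independent conditions become proportional.
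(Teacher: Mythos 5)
Your argument is correct and rests on exactly the same two ingredients as the paper's proof: the Codazzi equation is used to trade the derivative of the Hopf-differential term in $\d^{\ns,\np}*\II_-$ for a second copy of $\np\mcv$, and the curvature identity \eqref{eq:7} (via the $j$-eigenspace decomposition) is what kills the curvature obstruction in that trade. The only difference is presentational: you run the computation in a conformal coordinate with $Q=\II(\partial,\partial)$ and $\mcv^{1,0}$, whereas the paper performs the equivalent manipulation invariantly, computing $*\d^{\ns,\np}*\II=(R^{\nb}(e_i,\cdot)e_i)^\perp+2\np\mcv$ and then applying the projection $\pi_-$.
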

\begin{proof}
Let $\nabla\II$ denote the covariant derivative of $\II$ with respect
to the connection on $T^*\submfd\otimes\Hom(T\submfd,N\submfd)$
induced by $\ns,\np$.  For $X\in T\submfd$, we have
\begin{equation}
\label{eq:11}
(*\d^{\ns,\np}*\II)X=(R^{\nb}(e_i,X)e_i)^\perp+2\np_X\mcv.
\end{equation}
Indeed, with $e_1, e_2=je_1$ a local frame of $T\submfd$,
\begin{align*}
(*\d^{\ns,\np}*\II)X&=(\trace\nabla\II)X
=(\nabla_{e_i}\II)_{e_i}X\\
&=(\nabla_{e_i}\II)_X e_i\\
&=(\d^{\ns,\np}\II)_{e_i,X}(e_i)+(\nabla_X\II)_{e_i}e_i\\
&=(\d^{\ns,\np}\II)_{e_i,X}(e_i)+\trace\nabla_X\II\\
&=(R^{\nb}(e_i,X)e_i)^\perp+2\np_X\mcv,
\end{align*}
where we have used, in order, the symmetry of $\II$; that
$\ns$ is torsion-free; the Codazzi equation and that $\trace$ is
$\ns$-parallel. 

Moreover, observe that the endomorphism $X\mapsto
(R^{\nb}(e_i,X)e_i)^\perp$ commutes with $j$ by virtue of
\eqref{eq:7}:
\begin{equation*}
j(R^{\nb}(e_i,X)e_i)^\perp=(R^{\nb}(je_i,jX)je_i)^\perp=(R^{\nb}(e_i,jX)e_i)^\perp
\end{equation*}
so that, applying the $\ns,\np$-parallel projection $\pi_-$ to
\eqref{eq:11}, we have:
\begin{equation*}
*\d^{\ns,\np}*\II_-=\pi_-(*\d^{\ns,\np}*\II)=2\pi_-(\np\mcv).
\end{equation*}
We conclude that $j$ is vertically harmonic if and only if $\np\mcv$
commutes with $j$ or, equivalently, $\mcv$ is a holomorphic section
of $(N\submfd,j_{|N\submfd})$.
\end{proof}
\subsection{Examples}

Let us enumerate the $4$-symmetric spaces that fibre over a
simply-connected $4$-dimensional Riemannian symmetric space $N$.  There
are three cases:

\subsubsection{ $N$ has constant sectional curvatures} Here both
$J_+(N)$ and $J_-(N)$ are themselves globally $4$-symmetric so there
is no algebraic constraint on $j$ to take into account.  We therefore
recover a theorem of Hasegawa:
\begin{thm}[\cite{Has07}]
A conformal immersion $\phi:\submfd\to N$ of a Riemann surface into a
simply connected $4$-manifold of constant sectional curvature 
has vertically harmonic twistor lift $j_\pm$ if and only if its mean
curvature vector is holomorphic in $(N\submfd,(j_\pm)_{|N\submfd})$. 
\end{thm}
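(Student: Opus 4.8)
The plan is to deduce this statement directly from Theorem~\ref{th:2}. That theorem already yields the desired equivalence --- vertical harmonicity of $j_\pm$ versus holomorphicity of $\mcv$ in $(N\submfd,(j_\pm)_{|N\submfd})$ --- for \emph{any} twistor lift that factors through a locally $4$-symmetric space fibring over $N$ as in Section~\ref{sec:4-symmetric-spaces}. So the only point to establish is that, when $N$ has constant sectional curvature, the canonical twistor lifts $j_\pm:\submfd\to J_\pm(N)$ automatically factor through such a space; once that is in hand there is nothing more to prove.

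To see this, write $N=G/K$ with $G$ the identity component of the isometry group, so that, working locally if necessary, $N$ is one of $S^4$, $\R^4$ or $H^4$. First I would verify the curvature identity \eqref{eq:7} for \emph{every} $j\in J(N)$: constant sectional curvature means the curvature operator $R(X,Y)$ is a fixed constant multiple of $X\wedge Y$, where $(X\wedge Y)Z=\langle Y,Z\rangle X-\langle X,Z\rangle Y$, and a one-line computation gives $j\circ(X\wedge Y)\circ j^{-1}=(jX)\wedge(jY)$ for orthogonal $j$; hence $R(jX,jY)=j\circ R(X,Y)\circ j^{-1}$. Since $G$ acts transitively on the oriented orthonormal frames of a space form, it acts transitively on each component $J_\pm(N)$ of the twistor space, so each $J_\pm(N)$ is a single $G$-orbit. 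Theorem~\ref{th:1} --- in the sharpened form of \cite[Theorem~10]{Khe08} that also covers the flat case --- then identifies this orbit as a locally $4$-symmetric space arising from an order-$4$ automorphism $\tau\in\Aut(\g)$, i.e.\ fibring over $N$ exactly as in Section~\ref{sec:4-symmetric-spaces}. Thus $j_\pm:\submfd\to J_\pm(N)=G/H$ factors through a locally $4$-symmetric space and Theorem~\ref{th:2} applies verbatim.

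I expect the only delicate point to be the flat case $N=\R^4$, where \eqref{eq:7} is vacuous and hence gives no information --- this is precisely why Theorem~\ref{th:1} must be invoked in its sharpened form. One can instead handle it by hand: with $\g=\R^4\rtimes\so(4)$, $\k=\so(4)$, $\p=\R^4$ and $j_0$ a fixed orthogonal complex structure on $\R^4$, the map $\tau(v,A)=(j_0v,\,j_0Aj_0^{-1})$ is an automorphism of $\g$ with $\tau^4=1$ (as $j_0^2=-1$), $\tau^2$ the symmetric-space involution of $\R^4$, and $\tau_{|\p}=j_0$; the resulting subgroup $H$ of Section~\ref{sec:4-symmetric-spaces} is $U(2)\subset SO(4)$, so $G/H=\R^4\times S^2=J_\pm(\R^4)$ and fibres over $\R^4$ as required. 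Either way the reduction to Theorem~\ref{th:2} then goes through uniformly for $S^4$, $\R^4$ and $H^4$.
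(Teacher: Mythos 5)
Your proposal is correct and follows essentially the same route as the paper: the paper's entire argument is the remark preceding the theorem that, for constant sectional curvature, $J_+(N)$ and $J_-(N)$ are themselves (globally) $4$-symmetric, so the canonical lifts $j_\pm$ automatically satisfy the hypotheses of Theorem~\ref{th:2} and there is no algebraic constraint to check. You simply supply the details the paper leaves implicit --- the verification of \eqref{eq:7}, the transitivity of $G$ on $J_\pm(N)$, and the explicit order-$4$ automorphism in the flat case --- all of which are sound.
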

It is interesting that these surfaces already solve an integrable
system in conformal geometry: they are constrained Willmore surfaces
(see \cite{BurCal} and also Bohle \cite{Boh06} for the case
$N=\mathbb{R}^4$).

\subsubsection{ $N$ has constant holomorphic sectional curvatures}
For any K\"ahler $4$-manifold $N$, fix the orientation so that the
ambient complex structure $J^N$ is a section of $J_+(N)$.   If $N$
has constant holomorphic sectional curvatures, $J_-(N)=\set{j\in
J(N)\colon [j,J^N]=0}$ is again $4$-symmetric (for example, if $N=\C
P^2$, $J_-(N)$ is the full flag manifold of $\mathrm{SU}(3)$).  Again
there is no algebraic constraint to take into account and we conclude:
\begin{thm}
A conformal immersion $\phi:\submfd\to N$ of a Riemann surface into a
simply connected $4$-manifold of constant holomorphic sectional curvature 
has vertically harmonic twistor lift $j_-$ if and only if its mean
curvature vector is holomorphic in $(N\submfd,(j_-)_{|N\submfd})$. 
\end{thm}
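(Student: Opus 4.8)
The plan is to obtain the theorem as an immediate consequence of Theorem~\ref{th:2}.  That theorem asserts, for any twistor lift factoring through a locally $4$-symmetric space fibering over $N$ as in Section~\ref{sec:4-symmetric-spaces}, that vertical harmonicity is equivalent to holomorphicity of $\mcv$ in $(N\submfd, j_{|N\submfd})$ --- exactly what is claimed here.  So all that must be supplied is that this hypothesis applies to $j = j_-$: that $J_-(N)$, which contains the image of $j_-$, is such a locally $4$-symmetric space.

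First I would identify $N$.  A simply-connected $4$-manifold of constant holomorphic sectional curvature is a complex space form; the flat case $N = \C^2 \cong \R^4$ is already covered by the preceding theorem on constant sectional curvature, so we may take $N = G/K$ to be $\C P^2$ or $\C H^2$ with an invariant metric --- a Hermitian symmetric space whose effective isometry group $G$, namely $\mathrm{PU}(3)$ or $\mathrm{PU}(2,1)$, acts transitively on the $S^2$-bundle $J_-(N)$ of orthogonal complex structures commuting with $J^N$.  Hence $J_-(N)$ is a single $G$-orbit, and by Theorem~\ref{th:1} it is a locally $4$-symmetric space arising as in Section~\ref{sec:4-symmetric-spaces} as soon as the curvature identity \eqref{eq:7} holds at every $j \in J_-(N)$.

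This verification is the crux.  I would start from the standard expression for the curvature operator of a space of constant holomorphic sectional curvature $c$,
\[
R(X,Y) = \tfrac{c}{4}\bigl(X\wedge Y + (J^N X)\wedge(J^N Y) + 2\langle X, J^N Y\rangle\, J^N\bigr),
\]
where $X\wedge Y \in \so(TN)$ denotes $Z \mapsto \langle Y,Z\rangle X - \langle X,Z\rangle Y$, and observe that each of its three terms is equivariant under conjugation by an orthogonal $j$ with $[j, J^N] = 0$: indeed $j(X\wedge Y)j^{-1} = (jX)\wedge(jY)$, whence $j\bigl((J^N X)\wedge(J^N Y)\bigr)j^{-1} = (J^N jX)\wedge(J^N jY)$, while $\langle jX, J^N jY\rangle = \langle X, J^N Y\rangle$ and $jJ^N j^{-1} = J^N$.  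Summing, $R(jX,jY) = j\circ R(X,Y)\circ j^{-1}$, which is \eqref{eq:7}.  (By contrast, in the constant-sectional-curvature case the curvature is merely $\kappa\, X\wedge Y$, equivariant under \emph{every} orthogonal $j$, which is why there both lifts $j_\pm$ occur; here only the $j$ commuting with $J^N$ survive, i.e.\ $j_-$.)

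With \eqref{eq:7} established, Theorem~\ref{th:1} exhibits $J_-(N) = G/H$ as a locally $4$-symmetric space fibering over $N$ as in Section~\ref{sec:4-symmetric-spaces}, with no residual algebraic constraint since $J_-(N)$ is a single orbit; so $j_-$ factors through it and Theorem~\ref{th:2} applies, completing the proof.  The only genuine work I anticipate is the curvature computation above, together with the routine checks that $\C P^2$ and $\C H^2$ satisfy the hypotheses of Theorem~\ref{th:1} --- simple connectivity, effectivity of $G$, and non-flatness, so that the exceptional flat case of the remark following that theorem does not intrude --- and that $G$ acts transitively on $J_-(N)$; everything downstream of \eqref{eq:7} is a direct appeal to Theorems~\ref{th:1} and~\ref{th:2}.
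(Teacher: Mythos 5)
Your proposal is correct and follows the paper's own route: the paper likewise presents this theorem as an immediate corollary of Theorem~\ref{th:2}, the only point needing attention being that $J_-(N)$ is itself a ($4$-symmetric) single $G$-orbit so that no algebraic constraint on $j_-$ intervenes. The only difference is that you verify the $4$-symmetry of $J_-(N)$ explicitly via the curvature identity \eqref{eq:7} and Theorem~\ref{th:1}, a detail the paper leaves implicit (it simply notes, for instance, that $J_-(\C P^2)$ is the full flag manifold of $\mathrm{SU}(3)$).
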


\subsubsection{$N$ is Hermitian symmetric}
Again we fix orientations so that the ambient complex structure $J^N$
is a section of $J_+(N)$ and now contemplate the subbundle
$Z=\set{j\in J_+(N): \{j,J^N\}=0}$ of almost complex
structures that anti-commute with $J^N$.  This is a circle bundle
over $N$ (it is the unit circle bundle in the canonical bundle of
$N$) and is $4$-symmetric.

In this case, we do have an algebraic constraint of the twistor lift
$j_+$.  Indeed, let $\omega^N$ be the
K\"ahler form of $N$.  We then have:
\begin{lemma}\label{th:4}
$j_+$ takes values in $Z$ if and only if $\phi$ is Lagrangian:
$\phi^*\omega^N=0$.
\end{lemma}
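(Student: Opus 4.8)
The plan is to reduce the statement to pointwise linear algebra in the fibre of $J_+(N)$ over $\phi(p)$.  First, since $j_+$ and $J^N$ are skew-symmetric, $\{j_+,J^N\}$ is a symmetric endomorphism of $\phi^{-1}TN$, and a one-line computation gives $\langle\{j_+,J^N\}v,v\rangle=-2\langle j_+v,J^Nv\rangle$; thus $j_+$ takes values in $Z$ if and only if $\langle j_+v,J^Nv\rangle=0$ for every $v\in\phi^{-1}TN$.  The key observation is then that, in four dimensions, $\langle j_1v,j_2v\rangle$ is independent of the unit vector $v$ whenever $j_1,j_2$ lie in a common fibre of $J_+(N)$: modelling that fibre as the unit imaginary quaternions acting on $\R^4\cong\mathbb{H}$ by left multiplication, one finds $\langle q_1v,q_2v\rangle=\mathrm{Re}(\bar v\,\bar q_1q_2\,v)=\langle q_1,q_2\rangle\,|v|^2=-\tfrac14\trace(j_1j_2)\,|v|^2$.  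Applying this with $j_1=j_+$ and $j_2=J^N$ produces a function $c\colon\submfd\to\R$ with $\langle j_+v,J^Nv\rangle=c\,|v|^2$ for all $v$, so that $j_+$ takes values in $Z$ precisely when $c\equiv0$.

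It remains to identify $c$ as the density of $\phi^*\omega^N$.  Here I would use that $j_+$ is a twistor lift of $\phi$ (true by construction), so $j_+\,\d\phi X=\d\phi(J^\submfd X)$; taking $v=\d\phi X$ for $X\in T\submfd$ gives
\[
c\,|\d\phi X|^2=\langle\d\phi(J^\submfd X),J^N\d\phi X\rangle=-\langle J^N\d\phi(J^\submfd X),\d\phi X\rangle=-(\phi^*\omega^N)(J^\submfd X,X)=(\phi^*\omega^N)(X,J^\submfd X).
\]
Since $\phi$ is a conformal immersion, $|\d\phi X|^2$ is the area density of the induced metric paired with $(X,J^\submfd X)$, so the identity above says exactly $\phi^*\omega^N=c\,\d A_\submfd$ (the cosine of the Kähler angle of $\phi$).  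Hence $c\equiv0$ if and only if $\phi^*\omega^N=0$, which is the assertion of the lemma.

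The one genuinely delicate point is the $v$-independence of $\langle j_1v,j_2v\rangle$ for $j_1,j_2$ in a common fibre of $J_+(N)$: this is special to dimension four and to the \emph{same} twistor component, and it is precisely here that the standing hypotheses — $N$ Hermitian symmetric, orientations fixed so that $J^N$ is a section of $J_+(N)$, and $j_+$ the lift into $J_+(N)$ — are used.  (Were $j_+$ replaced by the other lift $j_-\in J_-(N)$, the analogous computation would instead force $j_-$ to \emph{commute} with $J^N$ for Lagrangian $\phi$, consistent with the description of $J_-(N)$ in the constant holomorphic sectional curvature case above.)  A more pedestrian alternative, avoiding quaternions, is to work at each point with a positively oriented orthonormal frame $e_1,e_2=j_+e_1,e_3,e_4=j_+e_3$ adapted to the splitting $\phi^{-1}TN=\d\phi(T\submfd)\oplus N\submfd$ and to check, using $(J^N)^2=-1$ and $J^N\in J_+(N)$, that every entry of $\{j_+,J^N\}$ vanishes exactly when $\omega^N(e_1,e_2)$ does; this trades the conceptual point for a short orientation/determinant computation.
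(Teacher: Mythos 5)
Your proof is correct, but it takes a genuinely different route from the paper's. The paper argues the two directions separately and softly: for the forward direction it observes that when $\phi$ is Lagrangian, $J^N$ interchanges $\d\phi(T\submfd)$ and $N\submfd$ isometrically, so conjugation by $J^N$ sends $j_+$ to $\pm j_+$, and membership in $J_+(N)$ pins the sign to $-$; for the converse it computes directly that $\{j_+,J^N\}=0$ forces $\phi^*\omega^N(X,J^\submfd X)$ to equal its own negative. You instead prove a single quantitative identity: using the four-dimensional fact that two orthogonal complex structures in the same fibre and same component of the twistor space satisfy $\{j_1,j_2\}=-2\langle j_1,j_2\rangle\,\mathrm{id}$ (your quaternionic computation is sound, as is the reduction of $\{j_+,J^N\}=0$ to the vanishing of the symmetric quadratic form $v\mapsto\langle j_+v,J^Nv\rangle$), you identify $-\tfrac12\{j_+,J^N\}$ with the cosine $c$ of the K\"ahler angle and show $\phi^*\omega^N=c\,\d A_\submfd$, so both conditions vanish together. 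This is where the twistor-lift property $j_+\circ\d\phi=\d\phi\circ J^\submfd$ and the standing hypothesis $J^N\in J_+(N)$ enter, exactly as you say. What your approach buys is a sharper statement --- a pointwise measure of the simultaneous failure of both conditions, and a transparent explanation of why the same-component hypothesis is essential (with the correct prediction for $j_-$); what the paper's buys is brevity and independence from any quaternionic model of the fibre. Both are complete proofs.
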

\begin{proof}
First, if $\phi$ is Lagrangian, $J^N$ restricts to isometries
$T\submfd\to N\submfd$ and $N\submfd\to T\submfd$ whence $J^N j_+=\pm
j_+ J^N$ and now, since $j_+$ takes values in $J_+(N)$, we must have $J^N j_+=-
j_+ J^N$.  Thus $j_+$ is $Z$-valued.

Conversely, if $\{j,J^N\}=0$ then one readily computes that, for
$X\in T\submfd$, 
\begin{equation*}
\phi^*\omega^N(X,J^\submfd
X)=-\phi^*\omega^N(X,J^\submfd X)
\end{equation*}
so that $\phi$ is Lagrangian.
\end{proof}
Now recall the \emph{Maslov form} of a Lagrangian immersion $\phi$
which is given\footnote{Here we have dropped a customary factor of
$1/\pi$.} by $\beta=\iota_\mcv \omega^N\in\Omega^1_\submfd$ .  We
have:
\begin{lemma}
Let $\phi:\submfd\to N$ be Lagrangian with twistor lift
$j=j_+:\submfd\to Z$.  Then
\begin{equation}
\label{eq:12}
\II_-=\beta J^N{}_{|T\submfd}.
\end{equation}
\end{lemma}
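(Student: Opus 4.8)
The plan is to reduce \eqref{eq:12} to the total symmetry of the cubic form of a Lagrangian immersion into a K\"ahler manifold, plus a scrap of two-dimensional linear algebra.

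First I would record that $C(X,Y,Z):=\langle\II(X,Y),J^NZ\rangle$, for $X,Y,Z\in T\submfd$, is totally symmetric. It is symmetric in $X,Y$ since $\II$ is; for the symmetry in $Y,Z$ one computes, using that $J^NZ$ and $J^NY$ are normal to $\phi$ (the Lagrangian hypothesis), that $\nb$ is metric with $\nb J^N=0$ ($N$ is K\"ahler), and that $J^N$ is skew-adjoint,
\[
C(X,Y,Z)=\langle\nb_XY,J^NZ\rangle=-\langle Y,\nb_X(J^NZ)\rangle=-\langle Y,J^N\nb_XZ\rangle=\langle J^NY,\nb_XZ\rangle=C(X,Z,Y).
\]
Since $J^N$ restricts to an isometry $T\submfd\to N\submfd$, the symmetry of $C$ in its last two slots is exactly the statement that for each $X$ there is a \emph{symmetric} endomorphism $S_X$ of $T\submfd$ with $\langle S_XY,Z\rangle=C(X,Y,Z)$; equivalently $\II_X=J^N{}_{|T\submfd}\circ S_X$.

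Next I would feed in the hypothesis that $j=j_+$ takes values in $Z$, i.e.\ $\{j,J^N\}=0$, which upon restriction to $T\submfd$ reads $j_{|N\submfd}\circ J^N{}_{|T\submfd}=-J^N{}_{|T\submfd}\circ j_{|T\submfd}$. Substituting $\II_X=J^N{}_{|T\submfd}\circ S_X$ into the formula $(\II_-)_X=\half\bigl(\II_X+j_{|N\submfd}\circ\II_X\circ j_{|T\submfd}\bigr)$ for the $j$-anticommuting component gives
\[
(\II_-)_X=J^N{}_{|T\submfd}\circ\bigl(\half(S_X-j_{|T\submfd}\,S_X\,j_{|T\submfd})\bigr).
\]
Here the two-dimensional linear algebra enters: the component of a symmetric endomorphism of a surface that commutes with $j$ is precisely its trace part, so the inner factor is $\half(\trace S_X)\,\mathrm{id}$. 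Hence $\II_-=\beta\otimes J^N{}_{|T\submfd}$ with $\beta(X)=\half\trace S_X=\half\sum_i C(X,e_i,e_i)$ for any local orthonormal frame $(e_i)$ of $T\submfd$. Finally, observing that $\mcv=\half\trace\II=\half\sum_i\II(e_i,e_i)$, one gets $\langle\mcv,J^NX\rangle=\half\sum_i C(e_i,e_i,X)$, which equals $\beta(X)$ by the total symmetry of $C$; so $\beta=\iota_\mcv\omega^N$, which is \eqref{eq:12}.

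The only substantial step — and the one I expect to be the main obstacle — is the total symmetry of the cubic form; after that the argument is essentially forced. The remaining care is pure sign-bookkeeping: the precise action of $j$ on $N\submfd$ coming from $\{j,J^N\}=0$, and the chosen conventions for $\omega^N$ and for the Maslov form, which together pin down the sign in \eqref{eq:12}.
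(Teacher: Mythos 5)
Your proof is correct, and it rests on exactly the same two facts as the paper's: that $\nb J^N=0$ together with the Lagrangian condition forces a symmetry of $\II$ relative to $J^N$, and a scrap of two-dimensional linear algebra in the fibres. The packaging, however, is different enough to be worth comparing. The paper extracts from $\nb J^N=0$ the operator identity $\II\circ J^N=J^N\circ\II^t$ (its \eqref{eq:13}), takes its $j$-anti-commuting part to see that $\II_-$ points along $J^N{}_{|T\submfd}$ inside the rank-two bundle spanned by $J^N{}_{|T\submfd}$ and $jJ^N{}_{|T\submfd}$, and only then identifies the coefficient with the Maslov form by a separate frame computation. You instead encode the same identity as the total symmetry of the cubic form $C(X,Y,Z)=\langle\II(X,Y),J^NZ\rangle$ (a standard equivalent formulation for Lagrangian submanifolds of K\"ahler manifolds), write $\II_X=J^N{}_{|T\submfd}\circ S_X$ with $S_X$ symmetric, and observe that the $j$-commuting part of a symmetric endomorphism of an oriented $2$-plane is its trace part. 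This single observation delivers both conclusions of the paper's two-step argument at once: the direction of $\II_-$ \emph{and} the coefficient $\half\trace S_X$, whose identification with $\iota_\mcv\omega^N$ then follows immediately from the symmetry of $C$ in its first slot. The one point you rightly leave open is the overall sign, which depends on the convention $\omega^N(U,V)=\langle J^NU,V\rangle$ versus $\langle U,J^NV\rangle$ (the paper does not fix this explicitly, and its own final line $\beta_X=(X,J^N\mcv)$ differs from your $\beta_X=\langle\mcv,J^NX\rangle$ by exactly this sign); this is harmless convention-chasing and does not affect the substance of the lemma.
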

\begin{proof}
First note that since $N$ is K\"ahler, $\nb J^N=0$ whence
\begin{align}
\label{eq:13}
\II\circ J^N&=J^N\circ\II^t\\
\nabla J^N&=0\label{eq:14}
\end{align}
where, as before, $\nabla$ is the connection on $\so(TN)$ induced by
$\ns$ and $\np$.  Taking the $j$-anti-commuting part of \eqref{eq:13}
yields
\begin{equation}
\label{eq:15}
\II_-\circ J^N=J^N\circ(\II)_-^t.
\end{equation}
By definition $\II_-$ takes values in the subbundle of
$\Hom(T\submfd,N\submfd)$ consisting of homomorphisms that
anti-commute with $j$.  In the present situation, this subbundle is
spanned by $J^N{}_{|T\submfd}$ and $jJ^N{}_{|T\submfd}$ and we easily
see from \eqref{eq:15} that $\II$ points along $J^N{}_{|T\submfd}$
and so is of the form $\beta J^N{}_{|T\submfd}$ for some
$\beta\in\Omega^1_{\submfd}$.

It remains to identify $\beta$ with the Maslov form.  For this let
$X,e\in T_p\submfd$ with $e$ of unit length.  Then, using
$\II_-=\half(\II+j\II j)$, the skew-symmetry of $j$ and $J^N$ as well
as the fact that these anti-commute, we have
\begin{equation*}
\beta_X=(\II_-{}_X,J^Ne)=\half\bigl((\II_X,J^N e)+(\II_X je,J^N je)\bigr).
\end{equation*}
On the other hand, for any $Y\in T_p\submfd$, the symmetry of $\II$
along with \eqref{eq:13} yields
\begin{equation*}
(\II_X Y,J^N Y)=(\II_Y X,J^N Y)=(X,\II^t_Y J^N Y)=(X,J^N\II_Y Y)
\end{equation*}
so that we conclude that
\begin{equation*}
\beta_X=(X,J^N\mcv)
\end{equation*}
and we are done.
\end{proof}
With this in hand, the vertical harmonicity of $j_+$ is easy to
understand: in view of \eqref{eq:14}, we have
\begin{equation*}
\d^{\ns,\np}*\II_- =(\d *\beta)J^N{}_{|T\submfd}
\end{equation*}
so that $j_+$ is vertically harmonic if and only if the Maslov form
is co-closed which, in turn, is precisely the condition that $\phi$
be Hamiltonian stationary \cite{Oh93}.  To summarise:
\begin{thm}
Let $\phi:\submfd\to N$ be a conformal immersion of a Riemann surface
into a simply-connected $4$-dimensional Hermitian symmetric space
with twistor lift $j_+:\submfd\to J_+(N)$.

Then $j_+$ takes values in $Z=\set{j\in J_+(N): \{j,J^N\}=0}$ if and
only if $\phi$ is Lagrangian and then is vertically harmonic if and
only if $\phi$ is Hamiltonian stationary.
\end{thm}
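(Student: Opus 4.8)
The plan is to assemble pieces that are, by this point, all in place: Lemma~\ref{th:4}, the identification \eqref{eq:12} of $\II_-$ with the Maslov form, the vertical harmonicity criterion \eqref{eq:6}, and Oh's variational characterisation \cite{Oh93} of Hamiltonian stationary Lagrangians. For the first equivalence there is nothing to prove: Lemma~\ref{th:4} asserts exactly that $j_+$ is $Z$-valued if and only if $\phi^*\omega^N=0$, i.e.\ $\phi$ is Lagrangian.

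So assume henceforth that $\phi$ is Lagrangian, so that $j=j_+$ is a $Z$-valued twistor lift and, $Z$ being $4$-symmetric, $j$ factors through a locally $4$-symmetric space as in Section~\ref{sec:4-symmetric-spaces}. The criterion \eqref{eq:6} then says that $j$ is vertically harmonic precisely when $\d^{\ns,\np}*\II_-=0$. Feeding in \eqref{eq:12}, $\II_-=\beta J^N{}_{|T\submfd}$ with $\beta=\iota_\mcv\omega^N$ the Maslov form, and using that $J^N{}_{|T\submfd}$ is parallel for the $\ns,\np$-induced connection on $\Hom(T\submfd,N\submfd)$ --- which is \eqref{eq:14}, a consequence of $N$ being K\"ahler --- one gets $\d^{\ns,\np}*\II_-=(\d*\beta)\,J^N{}_{|T\submfd}$. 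Since $J^N{}_{|T\submfd}$ is a nowhere-vanishing bundle isomorphism, vertical harmonicity is equivalent to $\d*\beta=0$, that is, to co-closedness of the Maslov form; and by \cite{Oh93} this is precisely the condition that $\phi$ be Hamiltonian stationary.

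No serious obstacle remains once Lemma~\ref{th:4} and \eqref{eq:12} are in hand --- the real work was in those two lemmas, especially the identification of $\beta$ in \eqref{eq:12} with the Maslov form. The one point to get right is that \eqref{eq:6}, rather than the holomorphic-mean-curvature form of Theorem~\ref{th:2}, is the version of the vertical harmonicity criterion to invoke here, since it is co-closedness of $\beta$ that matches Oh's Euler--Lagrange equation directly; consistency of the two statements --- $\mcv$ holomorphic in $(N\submfd,j_{|N\submfd})$ versus $\d*\beta=0$ --- for Lagrangian immersions is then a reassuring check rather than something to be proved here.
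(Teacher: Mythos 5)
Your proposal is correct and follows exactly the paper's own route: Lemma~\ref{th:4} gives the Lagrangian equivalence, and combining \eqref{eq:6}, \eqref{eq:12} and \eqref{eq:14} reduces vertical harmonicity to co-closedness of the Maslov form, hence to Hamiltonian stationarity via \cite{Oh93}. This is precisely the argument the paper gives in the paragraph preceding the theorem.
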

This provides a conceptual explanation for the integrable system
appearing in the work of H\'elein--Romon
\cite{HelRom02,HelRom02A,HelRom05}.

\section{Prospects}

We have seen that the integrable system \eqref{eq:2} gives a
satisfying geometric theory for conformal immersions of Riemann
surfaces in $4$-dimensional symmetric spaces.  It is natural to ask
whether there are similar theories in case $\submfd$ has either
higher codimension in a symmetric space or, indeed, higher
dimension.  We briefly examine these possibilities.

\subsection{Higher codimension}
\label{sec:higher-co-dimension}

When $\dim N>4$, there are, in general, no canonically defined
twistor lifts.  However, there are still interesting examples.  Here
is one such: take $N=\mathbb{R}^8$, identified with the octonions and
contemplate the unit sphere $S^6\subset\mathrm{Im}\Oct$.  Left
multiplication by $q\in S^6$ is an orthogonal complex structure on
$\Oct$ so that we can view the trivial $S^6$-bundle over $N$ as a
submanifold of $J(N)$ which is once more a $4$-symmetric space.
Moreover, if $\phi:\submfd\to N$ is a conformal immersion of a
Riemann surface, we get a map $\submfd\to S^6$ by $p\mapsto
q_1\bar{q_2}$, for $q_1,q_2$ an oriented orthonormal basis of
$\d\phi(T_p\submfd)$, and thus a map $j:\submfd\to J(N)$.  It is easy
to see that $j$ is a twistor lift of $\phi$ and our theory applies to
show that the $\phi$ with vertically harmonic $j$ (the
$\rho$-harmonic surfaces of \cite{Khe05}) constitute an
integrable system.  Thus we recover the results of Khemar
\cite{Khe05}.

However, the geometric interpretation of $\rho$-harmonic surfaces is
not as straightforward as in codimension $2$: both $\np j$ and
$[\II^t_+\wedge\II^{1,0}_-]$ contribute non-trivial terms to
\eqref{eq:8}.  We may return to this elsewhere.

\subsection{Higher dimension}
\label{sec:higher-dimension}

It is well-known that the integrable systems approach to harmonic
maps of Riemann surfaces extends essentially without adjustment to
treat pluriharmonic maps of K\"ahler manifolds: see, for example,
\cite{BurFerPed93,OhnVal90}.  The same is true in the present case.

For this, with $\g,\tau$ as in Section
\ref{sec:an-integrable-system}, let $\submfd$ be a complex manifold
and $\alpha\in\Omega^1_{\submfd}\otimes\g$.  In this context, the
integrable system \eqref{eq:2} can be reformulated as:
\begin{subequations}
\label{eq:16}
\begin{gather}
\alpha^{0,1}_{1}=0 \label{eq:17}\\
\bar{\partial}\alpha^{1,0}_2+[\alpha^{0,1}_0\wedge\alpha^{1,0}_2]=0
\label{eq:18}\\
\d\alpha+\tfrac{1}{2}[\alpha\wedge\alpha]=0.\label{eq:19}
\end{gather}
\end{subequations}
This again has a zero curvature representation so long as
$\h=\g_0\cap\g$ is compact: indeed, with $\alpha_\lambda$
defined as before, the flatness of $\d+\alpha_\lambda$ amounts to
\eqref{eq:16} along with $[\alpha_2^{1,0}\wedge\alpha_2^{1,0}]=0$.
However, we can argue as in \cite{OhnVal90} starting with
$(\partial\bar{\partial}+\bar{\partial}\partial)\alpha_2^{1,0}=0$ and
using \eqref{eq:16} to conclude that
\begin{equation*}
[[\alpha_2^{1,0}\wedge\alpha_2^{1,0}]\wedge\alpha_2^{0,1}]=0.
\end{equation*}
The vanishing of $[\alpha_2^{1,0}\wedge\alpha_2^{1,0}]$ now follows
by contracting this against $\alpha_2^{0,1}$ and using the
definiteness of the Killing form on $\h$.

We can now repeat the analysis of Section~\ref{sec:vert-harm-twist},
\emph{mutatis mutandis}, to conclude:
\begin{thm}
Let $j:\submfd\to G/H \subset J(G/K)$ be a map of a complex manifold
into the twistor space of a Riemannian symmetric space $G/K$ which
factors through a locally $4$-symmetric space as in
section~\ref{sec:4-symmetric-spaces}.  Let $\phi=\pi\circ j$. 

Then $j$ admits local frames $g$ with $\alpha=g^{-1}\d g$ solving
\eqref{eq:16} if and only if 
\begin{enumerate}
\item $j$ is a twistor lift: $j\circ\d\phi=\d\phi\circ J^\submfd$;
\item $j$ is vertically pluriharmonic: $[\bar{\partial}^{\nb}(\nb j)^{1,0},j]=0$. 
\end{enumerate}
Moreover, in this case, $[R^{\nb},j]^{1,1}=0$. 
\end{thm}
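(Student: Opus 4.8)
The plan is to mimic the argument of Section~\ref{sec:vert-harm-twist} verbatim, tracking which steps use the complex structure of $\submfd$ and which only use the general framework. First I would set up the same dictionary: a local frame $g$ with $\alpha=g^{-1}\d g$ identifies $\phi^{-1}TN^\C$ with $\submfd\times\p^\C$, sending $j$ to $\tau_{|\p}$, $\d\phi$ to $\alpha_1+\alpha_{-1}$, $\nb$ to $\d+\alpha_0+\alpha_2$, and defining $D=\nb-\half j(\nb j)$ to be the connection corresponding to $\d+\alpha_0$; as before $Dj=0$ and $\nb-D$ anti-commutes with $j$. Then \eqref{eq:17} is exactly the twistor-lift condition $j\circ\d\phi=\d\phi\circ J^\submfd$, and conformality of $\phi$ follows as before. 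This part is purely formal and identical to the surface case.

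Next I would interpret \eqref{eq:18}. Under the above dictionary, $\alpha_2^{1,0}$ corresponds to $\half(j(\nb j))^{1,0}=\half j(\nb j)^{1,0}$ (using $Dj=0$), and $\alpha_0^{0,1}$ corresponds to the $(0,1)$-part of the $D$-connection form, so \eqref{eq:18} becomes $\bar\partial^D\bigl(j(\nb j)^{1,0}\bigr)=0$, i.e.\ $[\bar\partial^{\nb}(\nb j)^{1,0},j]=0$ after using $Dj=0$ to pull the constant $j$ through $\bar\partial^D$; this is precisely condition (2). To see it is the vertical part of a pluriharmonic-map equation, I would decompose $\so(\phi^{-1}TN)=\so_+\oplus\so_-$ into $D$-parallel $j$-commuting and $j$-anti-commuting parts as before, note $\nb j$ and hence $(\nb j)^{1,0}$ take values in $\so_-$, and that $D=\pi_-\circ\nb$ on $\so_-$-valued forms since $[\so_-,\so_-]\subset\so_+$. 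Then $\bar\partial^{\nb}(\nb j)^{1,0}=\pi_-\bar\partial^{\nb}(\nb j)^{1,0}+(\text{$\so_+$-part})$, and pairing with $j$ kills the $\so_+$-part; the vanishing of $j[\bar\partial^{\nb}(\nb j)^{1,0},\cdot]$ reads off as $\pi_-\bigl(\bar\partial^{\nb}(\nb j)^{1,0}\bigr)=0$, which is the $\bar\partial$-part of the section-pluriharmonicity equation $\pi_-(\nb^*\nb j)=0$ in the sense appropriate to complex manifolds. Phrasing this cleanly — matching Wood's harmonic-section equation in the pluriharmonic setting — is the one genuinely new bookkeeping step, but it is a routine adaptation.

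For the final assertion $[R^{\nb},j]^{1,1}=0$, I would run the same curvature computation but keeping track of types. Starting from $(\nb j)^{1,0}$ being $\so_-$-valued and $D=\pi_-\circ\nb$ there, one computes $0=\bar\partial^D(\nb j)^{1,0}=\bar\partial^D D^{1,0} j$ up to the symmetric/type terms; expanding $\bar\partial^D D^{1,0}+\partial^D D^{0,1}$ (the $(1,1)$-curvature of $D$ acting on $j$) gives $[R^D,j]^{1,1}$, and since $Dj=0$ this vanishes, while $R^{\nb}-R^D$ is built from $\nb j$ and anti-commutes in the relevant way. Matching the $j$-commuting $(1,1)$-part then forces $[R^{\nb},j]^{1,1}=0$, exactly as the surface argument forced $[R^{\nb},j]=0$ from \eqref{eq:9}; here only the $(1,1)$-component is controlled because only $\bar\partial^D(\nb j)^{1,0}=0$ (rather than the full $\d^D j(\nb j)=0$) is available, which also uses that $\d\phi(T\submfd)$ is $j$-stable so that $\d\phi^{1,0}$ lands in the $(1,0)$-eigenspace of $j$ and Theorem~\ref{th:1}'s curvature identity applies on the relevant subspace.

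The main obstacle I anticipate is not any single hard step but rather keeping the type decompositions consistent throughout: on a complex manifold one must be careful that $\alpha_2$ is genuinely of type $(1,0)$ in the $\p$-valued picture only after using \eqref{eq:17}, that the constraint $[\alpha_2^{1,0}\wedge\alpha_2^{1,0}]=0$ (needed for zero curvature and established in the excerpt via the Killing-form argument under compactness of $\h$) does not interfere with the geometric reading, and that the passage from $\bar\partial^D\bigl(j(\nb j)^{1,0}\bigr)=0$ to the stated form of (2) correctly accounts for where $j$ being $D$-parallel is used versus where its commutator with curvature appears. None of this is deep, but it is exactly the place where a careless ``mutatis mutandis'' could hide a genuine discrepancy, so I would write out the type-tracking explicitly rather than appeal to the surface case.
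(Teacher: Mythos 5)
Your proposal is correct and follows exactly the route the paper intends: the paper's own ``proof'' of this theorem is literally the remark that one repeats the analysis of Section~\ref{sec:vert-harm-twist} \emph{mutatis mutandis}, and your type-by-type tracking (identifying \eqref{eq:18} with $\bar\partial^D\bigl(j(\nb j)^{1,0}\bigr)=0$ via $Dj=0$ and $D=\pi_-\circ\nb$ on $\so_-$, and obtaining $[R^{\nb},j]^{1,1}=0$ from the curvature identity \eqref{eq:7} together with the $j$-stability of $\d\phi(T\submfd)$, noting that only the $(1,1)$-component is controlled) is precisely the intended fleshing-out of that remark.
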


% \bibliographystyle{amsplain} 
% \bibliography{mathbib}
\providecommand{\bysame}{\leavevmode\hbox to3em{\hrulefill}\thinspace}
\providecommand{\MR}{\relax\ifhmode\unskip\space\fi MR }
% \MRhref is called by the amsart/book/proc definition of \MR.
\providecommand{\MRhref}[2]{%
  \href{http://www.ams.org/mathscinet-getitem?mr=#1}{#2}
}
\providecommand{\href}[2]{#2}

\end{document}